\theoremstyle{plain}
\newtheorem{thm}{Theorem}[section]
\newtheorem{lem}[thm]{Lemma}
\newtheorem{prop}[thm]{Proposition}
\newtheorem{cor}[thm]{Corollary}
\theoremstyle{definition}
\newtheorem{dff}[thm]{Definition}
\newtheorem{rem}[thm]{Remark}
\numberwithin{equation}{section}
\def\a{\alpha}
\def\A{\mathcal{A}}
\def\D{\mathcal{D}}
\def\F{\mathcal{F}}
\def\I{\mathcal{I}}
\def\U{\mathcal{U}}
\def\V{\mathcal{V}}
\def\ld{,\ldots,}
\def\phi{\varphi}
\def\ci{\circ}
\def\rz{\mathbb{R}}
\def\R{\rz}
\def\wyr#1{\textit{#1}}
\def\s{\subset}
\def\t{\times}
\def\r{\rightarrow}
 \DeclareMathOperator{\st}{star}
 \DeclareMathOperator{\diff}{Diff}
 \DeclareMathOperator{\id}{id}
\DeclareMathOperator{\supp}{supp} 
\DeclareMathOperator{\dom}{dom} 
\keywords{Diffeomorphism group, singular foliation, commutator
group, simple group,  leaf preserving diffeomorphism.}
\subjclass{53C12, 57R50, 57R52}
\thanks{Partially supported by the Polish Ministry of Science and Higher Education and the
AGH grant n. 11.420.04}
\address{Faculty of Applied Mathematics, AGH University of Science and
\linebreak Technology, al. Mickiewicza 30, 30-059 Krak\'ow,
Poland} \email{tomasz@agh.edu.pl}
\date{December 19, 2010}
\title{Correspondence between diffeomorphism groups and singular foliations }
\author{ Tomasz Rybicki}
\begin{document}

\maketitle

\begin{abstract} It is well-known that any isotopically connected
diffeomorphism group $G$ of a manifold  determines uniquely a
singular foliation $\F_G$. A one-to-one correspondence between the
class of singular foliations  and a subclass of diffeomorphism
groups is established.  As an illustration of this correspondence
it is shown that the commutator subgroup $[G,G]$ of an
isotopically connected, factorizable and non-fixing
$C^r$-diffeomorphism group $G$ is simple iff the foliation
$\F_{[G,G]}$ defined by $[G,G]$ admits no proper minimal sets. In
particular, the compactly supported $e$-component of the leaf
preserving $C^{\infty}$-diffeomorphism group of a regular
foliation $\F$ is simple iff $\F$ has no proper minimal sets.
 \end{abstract}

\section{Introduction}
Throughout by a \emph{foliation} we mean a singular foliation
(Sussmann \cite{sus}, Stefan \cite{st0}), and by a \emph{regular
foliation} we mean a foliation whose leaves have the same
dimension. Introducing the notion of foliations, Sussmann and
Stefan emphasized that they play a role of collections of
"accessible" sets. Alternatively, they regarded
 foliations as integrable smooth distributions. Another point of
view is to treat  foliations as by-products of non-transitive
geometric structures, c.f. \cite{dlm}, \cite{va} and examples in
\cite{ry2}. In Molino's approach some types of singular foliations
constitute collections of closures of leaves of certain regular
foliations (\cite{mol}, \cite{wol}). In this note we regard
foliations as a special type of diffeomorphism groups.

 Given a $C^{\infty}$ smooth paracompact boundaryless
manifold $M$, $\diff^r(M)_0$ (resp. $\diff^r_c(M)_0$), where
$1\leq r\leq\infty$,  is the subgroup of the group of all $C^r$
diffeomorphisms $\diff^r(M)$ on $M$ consisting of diffeomorphisms
that can be joined to the identity through a $C^r$ isotopy (resp.
compactly supported $C^r$ isotopy) on $M$. A diffeomorphism group
$G\leq \diff^r(M)$, is called \emph{isotopically connected}
 if any
element $f$ of $G$ can be joined to $\id_M$ through a $C^r$
isotopy in $G$. That is, there is a mapping $\R\t M\ni(t,x)\mapsto
f_t(x)\in M$ of class $C^r$ with $f_t\in G$ for all $t$ and such
that $f_0=\id$  and $f_1=f$. It is well-known that any
isotopically connected group $G\leq \diff^r(M)_0$ defines uniquely
a foliation of class $C^r$, designated by $\F_G$ (see sect. 2).

Our first aim is to establish a correspondence between the class
$\frak F^r(M)$ of all $C^r$-foliations on $M$ and a subclass of
diffeomorphism groups on $M$, and, by using it, to interpret some
results and some open problems concerning non-transitive
diffeomorphism groups.  The second aim is to prove new results
(Theorems 1.1 and 1.2) illustrating this correspondence.

A group  $G\leq\diff^r(M)$  is called \emph{factorizable} if for
every open cover $\U$ and every $g\in G$ there are $g_1\ld g_r\in
G$ with $g=g_1\ldots g_r$ and such that $g_i\in G_{U_i}$, $i=1\ld
r$, for some $U_1\ld U_r\in\U$. Here for $U\s M$ and
$G\leq\diff^r(M)$, $G_U$ stands for the identity component of the
group of all diffeomorphisms from $G$ compactly supported in $U$.
Next $G$ is said to be \wyr{non-fixing} if $G(x)\neq \{ x \}$ for
every $x \in X$.

\begin{thm}
 Assume that $G\leq
\diff^r_c(M)_0$, $ 1\leq r\leq\infty$, is isotopically connected,
non-fixing and factorizable group of diffeomorphisms of smooth
manifold $M$. Then the commutator group $[G,G]$ is simple if and
only if the corresponding foliation $\F_{[G,G]}$ admits no proper
(i.e. not equal to $M$) minimal set.
\end{thm}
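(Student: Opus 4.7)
I would prove the two implications separately, exploiting the correspondence between closed $[G,G]$-invariant sets in $M$ and normal subgroups of $[G,G]$.

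\emph{Necessity $(\Rightarrow)$.} I argue by contrapositive: supposing $\F_{[G,G]}$ admits a proper minimal set $K$, I produce a proper nontrivial normal subgroup of $[G,G]$. The set $K$ is closed, nonempty and $[G,G]$-invariant (being a union of leaves), so the identity $\supp(fgf^{-1})=f(\supp g)$ combined with $f(K)=K$ shows that
$$N := \{g \in [G,G] : \supp(g)\cap K = \emptyset\}$$
is normal in $[G,G]$. Non-fixing, isotopical connectedness, and factorizability yield nontrivial commutators supported in the nonempty open set $M\setminus K$, whence $N\neq\{e\}$. Since $K$ contains at least one nondegenerate leaf (the edge case in which $K$ collapses to a $[G,G]$-fixed point can be handled by replacing $N$ with the subgroup of elements that equal the identity on some neighborhood of $K$ and invoking non-fixing of the ambient $G$), some element of $[G,G]$ has support meeting $K$, so $N \neq [G,G]$ and $[G,G]$ is not simple.

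\emph{Sufficiency $(\Leftarrow)$.} Assume $\F_{[G,G]}$ has no proper minimal set; by a standard Zorn argument this is equivalent to density of every $[G,G]$-orbit in $M$. Given a nontrivial normal $N \triangleleft [G,G]$, I would show $N=[G,G]$ in three steps. \emph{(i) Spreading displacement via density.} Fix $g_0\in N\setminus\{e\}$ and an open $U_0$ with $g_0(U_0)\cap U_0 = \emptyset$; for every $x\in M$ and every open neighborhood $V$ of $x$, density of orbits produces $\phi\in [G,G]$ mapping some open $W$ with $x\in W \subset V$ into $U_0$, so that $g' := \phi^{-1} g_0 \phi \in N$ displaces $W$. \emph{(ii) Commutator absorption.} For $h_1,h_2\in [G,G]$ supported in $W$, disjointness of $W$ and $g'(W)$ makes $g' h_1 g'^{-1}$ commute with $h_2$, yielding the identity
$$[\,[g',h_1],\,h_2\,] \;=\; [h_1^{-1},h_2];$$
since $[g',h_1]\in N$, conjugation by $h_1$ then gives $[h_1,h_2]\in N$. \emph{(iii) Assembly by factorization.} Every $h\in [G,G]$ factors as a product of elements $h_i$ supported in displaceable open sets $W_i$, and perfectness of the local commutator groups $[G,G]_{W_i}$ (a consequence of isotopical connectedness together with fragmentation) writes each $h_i$ as a product of commutators absorbed by step~(ii), whence $h\in N$.

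The principal obstacle lies in step~(iii) of sufficiency: one must justify that every locally supported element of $[G,G]$ is a product of commutators of locally supported elements, i.e., the perfectness of the local groups $[G,G]_W$ for displaceable $W$. In the absence of a transitivity hypothesis this cannot be invoked directly from the classical Mather--Thurston simplicity theorems, and must instead rest on fragmentation-type results (presumably established by the author in companion work). The remaining manipulations---construction of the obstructing subgroup $N$ in necessity, spreading via density of orbits, and the commutator absorption computation---are then direct consequences of the foliation/group correspondence.
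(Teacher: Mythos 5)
Your necessity argument and steps (i)--(ii) of your sufficiency argument are essentially the paper's proof. The paper takes $H=\{g\in [G,G]: g|_L=\id_L\}$ for a proper nonempty closed saturated set $L$ instead of your support-disjointness subgroup $N$, and produces the required nontrivial element by the same non-fixing-plus-factorization device (factorize a displacing $f\in G$ over the cover $\{U, M\setminus\overline V\}$ and extract the first fragment moving $x$, then take a commutator). Your step (ii) is exactly the paper's computation $[\bar f,\bar g]=[[h,\bar f],\bar g]$; note only that your identity $[[g',h_1],h_2]=[h_1^{-1},h_2]$ holds up to conjugation by $g'h_1g'^{-1}$, which is harmless since $N$ is normal.

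The genuine gap is precisely where you locate it, in step (iii), and the paper closes it by a route that does \emph{not} require perfectness of any local group $G_W$ or $[G,G]_W$ --- which, as you say, is unavailable here (it is essentially a localized Mather--Thurston statement and fails to be known even for $G=\diff^r_c(M)_0$ when $r=\dim M+1$). The paper instead invokes Ling's theorem \cite{li} that the commutator subgroup of a factorizable group is perfect, so $[G,G]=[[G,G],[G,G]]$; hence it suffices to put each single commutator $[g_1,g_2]$ with $g_1,g_2\in[G,G]$ into the normal subgroup. One then fragments $g_1$ and $g_2$ (as elements of the factorizable group $G$) over a cover $\mathcal{V}$ starwise finer than the cover $\mathcal{U}=\{U_y\}$ of displaceable balls (Lemma 2.10), and expands $[g_1,g_2]$ via the identities
$$[fg,h]=f[g,h]f^{-1}[f,h],\qquad [f,gh]=[f,g]\,g[f,h]g^{-1}$$
into a product of conjugates of commutators $[h_1,h_2]$ with $h_1,h_2$ supported in members of $\mathcal{V}$. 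When the two supports are disjoint the commutator is trivial; when they meet, the star-refinement places both supports inside a single displaceable $U\in\mathcal{U}$, so each surviving factor lies in some $[G_{U'},G_{U'}]$ with $U'$ a translate of a displaceable ball, and is absorbed into the normal subgroup by your step (ii). In short, the missing ingredient is not local perfectness but the combination of Ling's \emph{global} perfectness of $[G,G]$ with the bilinearity-type expansion of commutators over a star-refinement; without one of these your step (iii) does not go through.
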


In early 1970's  Thurston and Mather proved that  the group
$\diff^r_c(M)_0$, where $1\leq r\leq\infty$, $r\neq \dim(M)+1$, is
perfect and simple (see \cite{Thu74},\cite{Mat}, \cite{Ban97}).
Next, similar results were proved for classical diffeomorphism
groups of class $C^{\infty}$ (\cite{Ban97},
 \cite{ry5}).
  For  the significance of these
simplicity theorems, see, e.g., \cite{Ban97}, \cite{ry5} and
references therein.

Let $(M_i,\mathcal F_i)$, $i=1,2$, be foliated manifolds. A map
$f:M_1\rightarrow M_2$ is called \emph{foliation preserving}
if~$f(L_x)=L_{f(x)}$ for any $x\in M_1$, where $L_x$ is the leaf
meeting $x$. Next, if~$(M_1,\mathcal F_1)=(M_2,\mathcal F_2)$ then
$f$ is \emph{leaf preserving} if~$f(L_x)=L_x$ for all $x\in M_1$.
Throughout  $\diff^r(M,\mathcal F)$ will stand for the group
of~all leaf preserving $C^r$-diffeomorphisms of~a~foliated
manifold $(M,\mathcal F)$. Define $\diff^r(M,\mathcal F)_0$ and
$\diff^r_c(M,\mathcal F)_0$ analogously as above. Observe that a
perfectness theorem for the compactly supported identity component
$\diff^{\infty}_c(M,\mathcal F)_0$, being a non-transitive
 counterpart of Thurston's theorem,
 has been proved by the author in
 \cite{ry1} and by Tsuboi in \cite{Tsu1}. Next, the author in \cite{ry2},
 following Mather \cite{Mat}, II,
 showed that $\diff^r_c(M,\mathcal F)_0$ is perfect provided
 $1\leq r\leq\dim\F$. Observe that, in general, the group $\diff^r_c(M,\mathcal F)_0$
is not simple for obvious reasons.

\begin{thm}
Let $(M,\F)$ be a foliation  on a $C^{\infty}$ smooth manifold $M$
with no leaves of dimension $0$. Then  the commutator subgroup
$$\D=[\diff^r_c(M,\F)_0,\diff^r_c(M,\F)_0]$$ is simple if and only
if $\F_{\D}$ does not have any proper minimal set. In particular,
if $\F$ is regular, and $1\leq r\leq \dim\F$ or $r=\infty$, then
$\diff^r_c(M,\F)_0$ is simple if and only if $\F$ has no proper
minimal sets.
\end{thm}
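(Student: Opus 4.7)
The plan is to reduce Theorem 1.2 to Theorem 1.1 applied to $G := \diff^r_c(M,\F)_0$. Three hypotheses of Theorem 1.1 must be checked: isotopic connectedness, non-fixing, and factorizability.

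Isotopic connectedness is built into the definition of the compactly supported identity component: each $f \in G$ admits a compactly supported $C^r$-isotopy $\{f_t\}$ with $f_0=\id$, $f_1=f$, every $f_t$ leaf-preserving, hence a path in $G$. For the non-fixing property, fix $x\in M$ and let $L_x$ be the leaf through $x$. By hypothesis $\dim L_x\geq 1$, and by the Sussmann--Stefan local description of $\F$ one finds a chart around $x$ in which $L_x$ appears as a positive-dimensional plaque; a standard bump-function construction produces a compactly supported leaf-preserving $C^r$-diffeomorphism, isotopic to the identity through such diffeomorphisms, that moves $x$ inside $L_x$.

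The substantial point is factorizability. Given $g\in G$ and an open cover $\U$ of $M$, one must write $g=g_1\cdots g_k$ with each $g_i$ in $G_{U_i}$ for some $U_i\in\U$. The classical argument of Palis--Smale/Banyaga proceeds by choosing a leaf-preserving isotopy $\{g_t\}$ connecting $\id$ to $g$, subdividing $[0,1]$ finely, and writing the resulting time-one pieces as products of diffeomorphisms supported in small sets by means of a partition of unity subordinate to $\U$. The subtlety here is to carry out this construction within the leaf-preserving category: one uses Sussmann--Stefan flow boxes for $\F$, so that cut-off and push-forward operations on the isotopy remain leaf-preserving at every time. For regular $\F$ this is the fragmentation lemma of \cite{ry1} and \cite{Tsu1}; for general singular $\F$ it is achieved by the same argument adapted to the Sussmann--Stefan charts.

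Granted these three properties, Theorem 1.1 yields that $\D=[G,G]$ is simple if and only if $\F_{\D}$ has no proper minimal set, establishing the first assertion. For the ``in particular'' statement, assume $\F$ is regular and either $1\leq r\leq\dim\F$ or $r=\infty$. The perfectness theorems of \cite{ry1}, \cite{Tsu1} and \cite{ry2} then give $[G,G]=G$, so $\D=G$. Moreover, $G$ acts transitively along each leaf of $\F$ (again via leaf-preserving bump constructions), so the foliation $\F_G$ associated to $G$ in the sense of Section 2 coincides with $\F$, and hence $\F_{\D}=\F$. The general equivalence therefore specializes to: $\diff^r_c(M,\F)_0$ is simple iff $\F$ admits no proper minimal set.

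The main obstacle is the fragmentation/factorizability step in the singular setting, because every cut-off of the isotopy must still lie in $G$; once this is secured, the result falls out of Theorem 1.1 combined with the already known perfectness and transitivity of $G$ along leaves.
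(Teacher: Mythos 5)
Your proposal is correct and follows essentially the same route as the paper: verify that $G=\diff^r_c(M,\F)_0$ is isotopically connected (by definition), non-fixing (since no leaf has dimension $0$), and factorizable (the paper's Proposition 2.11, proved by exactly the partition-of-unity fragmentation of the generating time-dependent vector field tangent to $\F$ that you describe), then apply Theorem 1.1, and for the regular case invoke the perfectness theorems of \cite{ry1}, \cite{Tsu1}, \cite{Mat}, \cite{ry2} to get $\D=G$. Your added observation that $\F_{\D}=\F_G=\F$ is the paper's Proposition 2.4, a detail the paper leaves implicit.
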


In the proof of Theorem 1.1 in sect. 3  some ideas from Ling
\cite{li} are in use.

\section{Foliations correspond to a subclass of the class of diffeomorphism groups}

Let $1\leq r\leq\infty$ and let $L$ be a~subset of
a~$C^r$-manifold $M$ endowed with a~$C^r$-differen\-tiable
structure which makes it an~immersed submanifold. Then $L$ is
\emph{weakly imbedded} if for any locally connected topological
space $N$ and a continuous map $f:N\rightarrow M$ satisfying
$f(N)\subset L$, the map $f:N\rightarrow L$ is continuous as well.
It~follows that in this case such a~differentiable structure is
unique . A \emph{foliation of class} $C^r$ is a partition
$\mathcal{F}$ of $M$ into weakly imbedded submanifolds, called
leaves, such that the following condition holds. If~$x$~belongs to
a $k$-dimensional leaf, then there is a local chart $(U,\phi)$
with $\phi(x)=0$, and $\phi(U)=V\times W$, where $V$ is open in
$\mathbb{R}^k$, and $W$ is open in $\mathbb{R}^{n-k}$, such that
if~$L\in \mathcal{F}$ then $\phi(L\cap U)=V\times l$, where
$l=\{w\in W| \phi^{-1}(0,w)\in L\}$. A foliation is called
\emph{regular} if all leaves have the same dimension.

 Sussmann (\cite{sus}) and Stefan (\cite{st0},\cite{st2}) regarded foliations
 as collections of accessible sets in the following sense.

\begin{dff}
A smooth mapping $\phi$ of a open subset of $\mathbb{R}\times M$
into $M$ is said to be a \emph{$C^r$-arrow}, $1\leq r\leq\infty$, if\\
(1) $\phi(t,\cdot)=\phi_{t}$ is a local $C^r$-diffeomorphism for
each $t$, possibly with empty domain,\\
(2) $\phi_0=\id$ on its domain,\\
(3) $\dom(\phi_t)\subset \dom(\phi_s)$ whenever $0\leq s<t$.
\end{dff}

Given an arbitrary set of arrows $\mathcal A$, let $\mathcal A^*$
be the totality of local diffeomorphisms $\psi$ such that $\psi =
\phi(t,\cdot)$ for some $\phi\in \mathcal A$, $t\in \mathbb{R}$.
Next $\hat{\mathcal A}$ denotes the set consisting of all local
diffeomorphisms being finite compositions of elements from
$\mathcal A^*$ or $(\mathcal A^*)^{-1}=\{\psi^{-1}|\psi \in
\mathcal A^*\}$, and of the identity. Then the orbits of
$\hat{\mathcal A}$ are called \emph{accessible} sets of $\mathcal
A$.

For $x\in M$ let $\mathcal A(x)$, $\bar{\mathcal A}(x)$ be the
vector subspaces of $T_x M$ generated by
$$\{\dot{\phi}(t,y)|\phi\in \mathcal A,\phi_t(y)=x\},
\quad \{d_y\psi(v)|\psi\in \hat{\mathcal A},\psi(y)=x,v\in
\mathcal A(y)\},$$ respectively. Then we have (\cite{st0})
\begin{thm}
Let $\mathcal A$ be an arbitrary set of $C^r$-arrows on M. Then
\begin{enumerate} \item every accessible set of $\mathcal A$ admits a (unique)
$C^r$-differentiable structure of~a~connected weakly imbedded
submanifold of~$M$; \item the collection of accessible sets
defines a foliation $\mathcal{F}$;  and \item
$\D(\F):=\{\bar{\mathcal A}(x)\}$ is the tangent distribution of
$\mathcal{F}$.\end{enumerate}
\end{thm}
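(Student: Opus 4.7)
The plan is to prove (1) and (3) simultaneously by producing, for each $x\in M$, an explicit local parametrization of the accessible set $L_x$ through $x$ as a composition of arrow time maps acting on $x$, and then to derive (2) by enlarging this parametrization along a transversal at $x$. The structural observation driving everything is that $\bar{\mathcal A}$ is $\hat{\mathcal A}$-invariant by construction, hence the function $x\mapsto\dim\bar{\mathcal A}(x)$ is constant on each accessible set.

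Fix $x$ and set $k=\dim\bar{\mathcal A}(x)$. By definition of $\bar{\mathcal A}$ choose $\psi_1\ld\psi_k\in\hat{\mathcal A}$, points $y_i$ with $\psi_i(y_i)=x$, and arrows $\phi^{(i)}\in\mathcal A$ emanating from $y_i$, so that the vectors $v_i=d_{y_i}\psi_i\bigl(\dot\phi^{(i)}(0,y_i)\bigr)$ form a basis of $\bar{\mathcal A}(x)$. Consider
\[
F(t_1,\ld t_k)=\bigl(\psi_k\ci\phi^{(k)}_{t_k}\ci\psi_k^{-1}\bigr)\ci\cdots\ci\bigl(\psi_1\ci\phi^{(1)}_{t_1}\ci\psi_1^{-1}\bigr)(x),
\]
which is $C^r$ near $0\in\R^k$ and takes values in $L_x$ because every factor belongs to $\hat{\mathcal A}$ for each fixed $t_i$. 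A direct calculation yields $\partial_{t_i}F(0)=v_i$, so $F$ is an immersion at the origin. Declare the topology on $L_x$ to have, as a basis at each $y\in L_x$, the images of analogous parametrizations $F^{(y)}$ centered at $y$. Invariance of $\bar{\mathcal A}$ forces every such parametrization to have the same rank $k$, and overlapping parametrizations are $C^r$-compatible because their transition maps are themselves compositions of arrow time maps. This produces the unique $C^r$-structure of a connected weakly imbedded submanifold on $L_x$, whose tangent distribution is $\bar{\mathcal A}$ by construction, settling (1) and (3).

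For (2), pick a $C^r$-submanifold $W\s M$ through $x$ of codimension $k$, transverse to $\bar{\mathcal A}(x)$, and extend $F$ to
\[
\tilde F(t_1,\ld t_k,w)=\bigl(\psi_k\ci\phi^{(k)}_{t_k}\ci\psi_k^{-1}\bigr)\ci\cdots\ci\bigl(\psi_1\ci\phi^{(1)}_{t_1}\ci\psi_1^{-1}\bigr)(w)
\]
on a neighborhood of $(0,x)$ in $\R^k\t W$. Its differential at $(0,x)$ is the direct sum of $T_xW$ and $\mathrm{span}(v_1\ld v_k)=\bar{\mathcal A}(x)$, hence equals $T_xM$, so the inverse function theorem makes $\tilde F$ a $C^r$-diffeomorphism onto an open $U\ni x$. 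Each factor preserves leaves, so $\tilde F(t,w)$ lies in the same leaf as $w=\tilde F(0,w)$; conversely, if $\tilde F(t,w)$ and $\tilde F(t',w')$ lie on a common leaf $L$, then so do $w$ and $w'$. Hence for every $L\in\F$ one has $L\cap U=\tilde F(V\t l_L)$ with $l_L=\{w\in W:\tilde F(0,w)\in L\}$, and $\phi=\tilde F^{-1}$ is the required foliation chart.

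The main obstacle is the implicit claim, used in step one, that the parametrizations $F^{(y)}$ actually provide a basis for an appropriate (Hausdorff, second countable) topology on $L_x$ matching the accessible-set structure — equivalently, that every point of $L_x$ sufficiently close to $x$ is in the image of some $F^{(y)}$. This forces one to verify that additional chain velocities beyond the pushed-forward $v_1\ld v_k$ produce no new directions in $\bar{\mathcal A}$ (this is built into its definition as the $\hat{\mathcal A}$-invariant hull of $\mathcal A$), and then to promote this infinitesimal statement to local surjectivity by integrating along the arrow time maps. Once this exhaustion is in place, the inverse function theorem and routine bookkeeping close both parts.
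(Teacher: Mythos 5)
First, a point of reference: the paper does not prove this statement at all --- it is quoted verbatim from Stefan (the citation \cite{st0} immediately precedes it), so there is no internal proof to compare against; your attempt has to be judged as a proof of the Stefan--Sussmann orbit theorem on its own terms. Your overall architecture (maximal-rank composition maps $F$ of arrow time maps as candidate parametrizations of the accessible set, then a transversal $W$ and the inverse function theorem to produce the distinguished chart) is indeed the standard route. The computation $\partial_{t_i}F(0)=v_i$ is correct (modulo the small point that the paper's $\mathcal A(y)$ is generated by velocities $\dot\phi(t,y)$ at arbitrary times, not only $t=0$, which you absorb by conjugating with $\phi_t$), and the observation that $\dim\bar{\mathcal A}$ is constant on accessible sets by $\hat{\mathcal A}$-invariance is correct and genuinely needed.

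However, the step you yourself flag as ``the main obstacle'' is not a loose end to be tidied up --- it \emph{is} the theorem, and your proposed way of closing it does not work. To know that $\im F$ locally exhausts $L_x$ you must show that $\im F$ is locally invariant under every arrow, i.e.\ that for $z\in\im F$ the curve $s\mapsto\phi_s(z)$ stays in $\im F$ for small $s$. This does not follow from the remark that $\bar{\mathcal A}$ ``is built into its definition as the $\hat{\mathcal A}$-invariant hull'': a priori you only get $T_z(\im F)\subset\bar{\mathcal A}(z)$ from the construction, and an arrow's velocity lies in $\mathcal A(z)\subset\bar{\mathcal A}(z)$, which need not be tangent to $\im F$ unless you first establish $T_z(\im F)=\bar{\mathcal A}(z)$ at \emph{every} point of the image and then invoke a uniqueness-of-integral-manifolds argument. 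The actual mechanism (Sussmann's) is a rank argument on the extended map $G(s,t)=\phi_s(F(t))$: all partial derivatives of $G$ lie in $\bar{\mathcal A}(G(s,t))$, which has constant dimension $k$ along the orbit, so $\operatorname{rank}G\equiv k$, and the constant rank theorem forces $\im G$ to coincide locally with $\im F$. Without this lemma, (1) is unproved, the leaf topology in your first step is not known to be well defined, the identity $T_xL_x=\bar{\mathcal A}(x)$ in (3) is only proved as an inclusion ``$\supset$'', and the surjectivity $L\cap U=\tilde F(V\times l_L)$ needed for the chart in (2) is unavailable. Relatedly, starting from $k=\dim\bar{\mathcal A}(x)$ and asserting that the orbit is $k$-dimensional quietly assumes part of conclusion (3); the safe order of argument is to work with the maximal rank of composition maps and identify it with $\dim\bar{\mathcal A}(x)$ afterwards.
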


 Let $G\leq \diff^{r}(M)$ be an isotopically connected group of
diffeomorphisms. Let $\A_G$ be the totality of restrictions of
isotopies $\R\t M\ni(t,x)\mapsto f_t(x)\in M$ in $G$ to open
subsets of $\R\t M$. Then by $\F_G$ we denote the foliation
defined by the set of arrows $\A_G$. Observe that $\hat\A_G=\A_G$
and, consequently, $\bar\A_G(x)=\A_G(x)$.

\begin{rem}
(1) Of course, any subgroup  $G\leq \diff^{r}(M)$ determines
uniquely a foliation. Namely, $G$ defines uniquely its maximal
subgroup $G_0$ which is isotopically connected.

(2) Denote by $G_c$ the subgroup of all compactly supported
elements of $G$. Then $G_c$ need not be isotopically connected
even if $G$ is so. In fact,  let $G=\diff^r(\R^n)_0$, $1\leq
r\leq\infty$. Then every $f\in G_c$ is isotopic  to the identity
but the isotopy need not be in $G_c$. That is, $G_c$ is not
isotopically connected. Observe that the exception is the $C^0$
case: due to Alexander's trick for $r=0$ (see, e.g., \cite{ed-ki},
p.70) $G_c$ is isotopically connected.

Likewise, let $C=\R\t\mathbb S^1$ be the annulus and let
$G=\diff^r(C)_0$. Then there is the twisting number epimorphism
$T:G_c\r\mathbb Z$. It is easily seen that $f\in G_c$ is joined to
id by a compactly supported isotopy iff $T(f)= 0$. Consequently,
$G_c$ is not isotopically connected.
\end{rem}

Denote by  $\frak G^r(M)$ (resp. $\frak G^r_c(M)$), $1\leq
r\leq\infty$, the totality of isotopically connected (resp.
isotopically connected through compactly supported isotopies)
groups of $C^r$ diffeomorphisms of $M$. Next the symbol $\frak
F^r(M)$ will stand for the totality of foliations of class $C^r$
on $M$. Then each $G\in\frak G^r(M)$ determines uniquely a
foliation from $\frak F^r(M)$, denoted by $\F_G$. That is, we have
the mapping $\beta_M: \frak G^r(M)\ni G\mapsto \F_G\in\frak
F^r(M)$. Conversely, to any foliation $\F\in\frak F^r(M)$ we
assign $G_{\F}:=\diff^r_c(M,\F)_0$ and we get the mapping
$\alpha_M:\frak F^r(M)\ni \F\mapsto G_{\F}\in \frak G^r_c(M)$. The
following is obvious.

\begin{prop}
One has $\beta_M\ci\alpha_M=\id_{\frak F^r(M)}$. In particular
$$\alpha_M:\frak F^r(M)\ni \F\mapsto
G_{\F}\in\frak G_c^r(M)$$ is an  injection identifying the class
of $C^r$-foliations with a subclass of $C^r$-diffeo\-mor\-phism
groups.
\end{prop}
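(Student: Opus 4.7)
The plan is to verify that $\beta_M\ci\alpha_M$ acts as the identity on $\frak F^r(M)$; the injectivity assertion then drops out formally. So fix $\F\in\frak F^r(M)$, set $G:=G_{\F}=\diff^r_c(M,\F)_0$, and observe that the goal becomes $\F_{G}=\F$ as foliations on $M$.

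I would prove this by showing that the two partitions coincide leaf by leaf, in two directions. The easy direction is that each leaf of $\F_G$ is contained in a leaf of $\F$: every arrow $\f\in\A_G$ comes from an isotopy whose time-$t$ maps lie in $G$, and so each $\f_t$ preserves every leaf of $\F$. Hence $\f_t(y)$ always remains on the $\F$-leaf through $y$, and the same persists under the composition and inversion operations defining $\hat\A_G$; each accessible set of $\A_G$ thus sits inside a single leaf of $\F$.

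The substantive direction is the converse: any two points $x,y$ on the same leaf $L$ of $\F$ can be joined by some $\psi\in\hat\A_G$. Here I would fix a continuous path $\gamma\s L$ from $x$ to $y$, cover $\gamma$ by finitely many distinguished foliation charts $(U_i,\phi_i)$ with $\phi_i(U_i)=V_i\t W_i$, and construct in each chart a compactly supported, leaf-preserving isotopy sliding a prescribed plaque point along a prescribed segment. Concretely, one selects a smooth compactly supported vector field on $V_i$, extends it trivially in the $W_i$ direction, cuts off by a bump function with support contained in a compact subset of $U_i$, and pushes forward to $M$; the time-one flow lies in $G$ because in the chart it acts within each plaque $V_i\t\{w\}$, so every leaf of $\F$ meeting $U_i$ (being a union of such plaques) is preserved as a set, and outside $U_i$ it is the identity. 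Composing these local contributions yields the required element of $\hat\A_G$ carrying $x$ to $y$.

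Combining both inclusions, the leaves of $\F_G$ and of $\F$ agree as subsets of $M$; by the uniqueness of the weak embedding structure on each leaf they also agree as weakly imbedded submanifolds, so $\F_G=\F$. The second assertion of the proposition follows immediately, since any left-inverse relation $\beta_M\ci\alpha_M=\id$ forces $\alpha_M$ to be injective. I expect the main obstacle to be the chart-wise construction in the singular setting: one must verify that sliding within the central plaque of $L$ really yields a \emph{globally} leaf-preserving diffeomorphism, but this is automatic because inside each chart \emph{every} leaf of $\F$ (not just $L$) is saturated by plaques of the form $V_i\t\{w\}$, and the constructed flow respects this product decomposition.
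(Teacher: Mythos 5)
Your argument is correct, and in fact it supplies more than the paper does: the paper states this proposition with no proof at all ("The following is obvious"), so there is nothing to compare line by line. Your two-inclusion scheme is exactly the standard justification one would write out. The easy inclusion (orbits of $\hat\A_{G_\F}$ lie in leaves of $\F$) is immediate since every arrow is a restriction of a leaf-preserving isotopy. For the substantive inclusion, your chart-wise sliding argument works because in a distinguished chart every leaf meets $U$ in a union of plaques $V\times\{w\}$, so a vector field with components only in the $V$-direction is tangent to $\F$ and its flow is a compactly supported leaf-preserving isotopy, hence lies in $\diff^r_c(M,\F)_0$. Two small points are worth making explicit if you write this up: (i) the subdivision of the path $\gamma$ into pieces each contained in a single plaque uses the leaf (manifold) topology of $L$, in which plaques are open -- this is where the weak imbedding hypothesis earns its keep; (ii) to stay in class $C^r$ it is cleaner to conjugate the smooth model flow on $V\times W$ by the $C^r$ chart, rather than push the vector field forward and reintegrate, since the latter loses a derivative. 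Neither point is a gap, and the formal deduction of injectivity from the left-inverse identity is of course fine.
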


 Observe that
usually $(\alpha_M\ci\beta_M)(G)\in\frak G^r_c(M)$ is not a
subgroup of $G$ even if $G\in\frak G^r_c(M)$. For instance, take
the group of Hamiltonian diffeomorphisms of a Poisson manifold,
see \cite{va}. See also examples in \cite{ry0}.

\begin{rem}
Note that we can also define $\alpha'_M:\frak F^r(M)\ni \F\mapsto
G'_{\F}\in \frak G^r(M)$, where $G'_{\F}:=\diff^r(M,\F)_0\in\frak
G^r(M)$, and we get another identification of the class of
$C^r$-foliations with a subclass of $C^r$-diffeo\-mor\-phism
groups. However we prefer $\alpha_M$ to $\alpha'_M$ because of
Proposition 2.11 below.
\end{rem}

For $\F_1,\F_2\in\frak F^r(M)$ we say that $\F_1$ is a
\emph{subfoliation} of $\F_2$ if each leaf of $\F_1$ is contained
in a leaf of $\F_2$. We then write $\F_1\prec\F_2$. By a
\emph{flag structure} we mean a finite sequence
$\F_1\prec\cdots\prec\F_k$ of foliations of $M$. Next, by the
\emph{intersection} of $\F_1, \F_2$ we mean the partition
$\F_1\bar\cap\F_2:=\{L_1\cap L_2:\, L_i\in\F_i, i=1,2\}$ of $M$.
Clearly, if $\F_1\bar\cap\F_2$ is a foliation then
$\F_1\bar\cap\F_2\prec\F_i$, $i=1,2$.

It is a rare phenomenon that $\F_1\bar\cap\F_2$ would be a regular
foliation, provided $\F_1, \F_2$ are regular. In the category of
(singular) foliations it may happen more often.

\begin{prop}
\begin{enumerate}
\item If the distribution $\D(\F_1\bar\cap\F_2)$ is of class $C^r$
(\cite {st0}) then $\F_1\bar\cap\F_2$ is a foliation. \item If
$G_1,G_2\in\frak G^r(M)$ have the intersection $G=G_1\cap G_2$
isotopically connected then $\F_G=\F_{G_1}\bar\cap\F_{G_2}$. \item
For $\F_1,\F_2\in\frak F^r(M)$, if the intersection
$\F_1\bar\cap\F_2$ is a foliation then there is $G\in\frak G^r(M)$
such that  $G\leq G_{\F_1}\cap G_{\F_2}$ and
$\F_G=\F_1\bar\cap\F_2$. \item For $\F_1,\F_2\in\frak F^r(M)$, if
$G_{\F_1}\cap G_{\F_2}$ is isotopically connected then the
intersection $\F_1\bar\cap\F_2$ is a foliation.
\end{enumerate}
\end{prop}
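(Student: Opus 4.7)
For (1) I would invoke the Sussmann--Stefan integrability result from \cite{st0}: since the intersection distribution $x\mapsto T_xL_1\cap T_xL_2$ is assumed to be of class $C^r$, it is locally finitely generated by $C^r$ vector fields and hence integrable, with maximal integral submanifolds equal to the connected components of $L_1\cap L_2$, which produces the desired $C^r$-foliation.

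For (3), set $G:=\diff^r_c(M,\F_1\bar\cap\F_2)_0$; every element of $G$ preserves each leaf of $\F_1\bar\cap\F_2$, and since each such leaf is contained in a leaf of $\F_i$, we get $G\leq G_{\F_1}\cap G_{\F_2}$, while the identity $\beta_M\ci\alpha_M=\id$ from Proposition~2.4 gives $\F_G=\F_1\bar\cap\F_2$. Part (4) will then be an immediate corollary of (2) applied with $G_i:=G_{\F_i}$: since $\F_{G_{\F_i}}=\F_i$ (again by Proposition~2.4), one obtains $\F_{G_{\F_1}\cap G_{\F_2}}=\F_1\bar\cap\F_2$, and the left-hand side is a foliation because $G_{\F_1}\cap G_{\F_2}$ is isotopically connected by hypothesis.

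The technical core is therefore (2). Writing $G=G_1\cap G_2$, every isotopy in $G$ is simultaneously an isotopy in $G_1$ and in $G_2$, so $\A_G\subseteq\A_{G_1}\cap\A_{G_2}$; consequently each orbit $Gx$ lies inside the connected component of $L_1\cap L_2$ through $x$, and $\F_G\prec\F_{G_1}\bar\cap\F_{G_2}$. For the reverse inclusion I would compare tangent distributions pointwise: trivially $\bar\A_G(x)\subseteq T_xL_1\cap T_xL_2$, and the key step is to prove equality by realizing every vector in the right-hand side as the velocity at $x$ of a $C^r$ isotopy contained in $G$. Once $\D(\F_G)(x)=T_xL_1\cap T_xL_2$ holds pointwise, both $\F_G$ and $\F_{G_1}\bar\cap\F_{G_2}$ are maximal integral manifolds of the same $C^r$ distribution and must coincide by the uniqueness statement in Theorem~2.2.

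The main obstacle is precisely this last step in (2): constructing, from a vector $v\in T_xL_1\cap T_xL_2$, a genuine $C^r$ isotopy in $G=G_1\cap G_2$ whose velocity at $x$ is $v$. I expect to handle it by working in a Sussmann--Stefan chart adapted to the intersection distribution and patching local flows preserving $\F_{G_1}$ and $\F_{G_2}$ simultaneously, leveraging the standing hypothesis that $G$ itself is isotopically connected.
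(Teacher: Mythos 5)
Your treatments of (1), (3) and (4) coincide with the paper's: (1) is exactly the integrability statement of Stefan--Sussmann, (3) is the choice $G=G_{\F_1\bar\cap\F_2}$ combined with $\beta_M\ci\alpha_M=\id$ (Proposition 2.4), and (4) is (2) applied to $G_{\F_1},G_{\F_2}$ together with Proposition 2.4. The problem is part (2), which you rightly identify as the technical core but attack at the wrong level.

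The pointwise identity $\D(\F_G)(x)=T_xL^1_x\cap T_xL^2_x$ on which your whole strategy rests is false in general, even in cases where the proposition itself holds. Take $M=\R^2$, let $\F_1$ be the foliation by the horizontal lines $y=c$ and $\F_2$ the foliation by the curves $y=c+x^3$, and let $G_i=G_{\F_i}$. Any two leaves $L^1,L^2$ through a common point meet only at that point, so every element of $G_1\cap G_2$ fixes every point: $G=G_1\cap G_2=\{\id\}$ is (vacuously) isotopically connected, $\F_G$ is the foliation by points and $\D(\F_G)\equiv 0$; yet at each point $(0,c)$ the two leaves are tangent and $T_xL^1\cap T_xL^2$ is one-dimensional. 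Hence the ``key step'' you defer --- realizing every $v\in T_xL^1\cap T_xL^2$ as the velocity of an isotopy in $G_1\cap G_2$ --- is not merely difficult, it cannot be carried out. The subsequent appeal to uniqueness also misfires: Theorem 2.2 asserts uniqueness of the differentiable structure on each accessible set, not that a singular distribution has a unique integral foliation, and $\F_1\bar\cap\F_2$ is a priori only a partition, not the accessible-set foliation of any system of arrows, so equality of distributions would not identify it with $\F_G$ anyway.

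The paper avoids the infinitesimal picture entirely and argues with orbits of isotopies: writing $\I G$ for the set of isotopies in $G$, one has $\I(G_1\cap G_2)=\I G_1\cap\I G_2$, the leaf of $\F_G$ through $x$ is the orbit $\I G(x)$ (this is where isotopic connectedness of $G$ enters), and this orbit is compared directly with $\I G_1(x)\cap\I G_2(x)=L^1_x\cap L^2_x$. If you want to repair your write-up, replace the comparison of tangent distributions by this set-level comparison of orbits; the inclusion $L_x\subseteq L^1_x\cap L^2_x$ is immediate from $\I G\subseteq\I G_1\cap\I G_2$, and the reverse inclusion is the only place where any real content remains.
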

\begin{proof}
(1) In fact, the distribution of $\F_1\bar\cap\F_2$ is then
integrable.

(2) Denote by $\I G$ the set of all isotopies in $G$. Clearly,
$\I(G_1\cap G_2)=\I{G_1}\cap\I{G_2}$ for arbitrary $G_1,
G_2\in\frak G^r(M)$. For $x\in M$, set $\I G(x):=\{y\in M:
(\exists f\in\I G)(\exists t\in I)\, y=f_t(x)\}$. By definition,
$L_x=\I G(x)$, where $L_x\in\F_G$ is a leaf meeting $x$.
Therefore, since $ G_1, G_2, G$ are isotopically connected we have
$L_x=\I G(x)=\I G_1(x)\cap \I G_2(x)=L^1_x\cap L_x^2$, where
$L^i_x\in\F_{G_i}$, $i=1,2$.

(3) Set $\F=\F_1\bar\cap\F_2$ and $G=G_{\F}$. Use Prop. 2.4.

(4) In view of Prop. 2.4 we have $\F_{G_{\F_0}}=\F_0$ for all
$\F_0\in\frak F^r(M)$. Put $G=G_{\F_1}\cap G_{\F_2}$. Therefore,
in view of (2),
$\F_1\bar\cap\F_2=\F_{G_{\F_1}}\bar\cap\F_{G_{\F_2}}=\F_G$ is a
foliation.
\end{proof}

Let  $\F_1\prec\cdots\prec\F_k$ be a flag structure on $M$ and let
$x\in M$. If $x\in L_i\in \F_i$ we write
 $p_i(x)=\dim L_i$, $\bar p_i(x)=p_i(x)-p_{i-1}(x)$ $(i=2\ld k)$ and
 $q_i(x)=m-p_i(x)$.

\begin{dff} A chart $(U,\phi)$ of  $M$ with $\phi(0)=x$ is called a {\it
distinguished chart} at $x$ with respect to
$\F_1\prec\cdots\prec\F_k$ if $U=V_1\t\cdots\t V_k \t W$ such that
$V_1\s \R^{p_1(x)}$,
 $V_i\s\R^{\bar p_i(x)}$ $(i\geq 2)$
and $W\s\R^{q_k(x)}$ are open balls and for any  $L_i\in \F_i$ we
have
$$
\phi(U)\cap L_i=\phi (V_1\t\cdots\t V_i\t l_i), $$ where
$l_i=\{w\in V_{i+1}\t\cdots\t V_k\t W:\ \phi(0,w)\in L_i\}$ for
$i=1\ld k$.
\end{dff}
Observe that actually the above $\phi$ is an inverse chart;
following \cite{st2} we call it a chart for simplicity. Notice as
well that in the above definition one need not assume that $\F_i$
is a foliation but only that it is a partition by weakly imbedded
submanifolds; that $\F_i$ is a foliation follows then by
definition.

\begin{thm}  Let  $G_1\leq\ldots\leq G_k\leq\diff^r(M)$ be an increasing sequence of
diffeomorphism groups of $M$. Then
$\F_{G_1}\prec\cdots\prec\F_{G_k}$ admits a distinguished chart at
any $x\in M$. \end{thm}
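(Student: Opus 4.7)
The plan is to construct $\phi$ by composing time-maps of arrows chosen successively from each group in the flag, adapting the classical construction behind Theorem 2.2 to the increasing sequence $G_1\leq\cdots\leq G_k$.

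First, the inclusion $\A_{G_i}\s\A_{G_{i+1}}$ of arrow sets forces each accessible set of $G_i$ to lie in one of $G_{i+1}$, giving $\F_{G_1}\prec\cdots\prec\F_{G_k}$. Fix $x\in M$ and let $L_j\in\F_{G_j}$ be the leaf through $x$, so $L_1\s\cdots\s L_k$ with $\dim L_j=p_j(x)$. I would then pick arrows level by level: by Theorem 2.2(3) applied to $\A_{G_1}$, choose $\phi^{(1)}_1\ld \phi^{(1)}_{p_1(x)}\in\A_{G_1}$ with $\dot\phi^{(1)}_i(0,x)$ spanning $T_xL_1$; having produced a basis of $T_xL_{j-1}$ from arrows in $G_1\ld G_{j-1}$, adjoin $\bar p_j(x)$ further arrows $\phi^{(j)}_1\ld \phi^{(j)}_{\bar p_j(x)}\in\A_{G_j}$ whose velocities at $x$ complete the basis of $T_xL_j$. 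Finally choose a $C^r$-submanifold $W\s M$ through $x$ of dimension $q_k(x)$ transverse to $L_k$ at $x$.

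With these choices, set
$$\phi(t^1_1\ld t^k_{\bar p_k(x)},w)=\phi^{(1)}_{1,t^1_1}\ci\cdots\ci\phi^{(k)}_{\bar p_k(x),t^k_{\bar p_k(x)}}(w)$$
for parameters near $0$ and $w$ near $x$. By construction $D\phi(0)$ maps the coordinate basis vectors of the parameter space bijectively onto the chosen basis of $T_xL_k$ stacked with a basis of $T_xW$, and since $T_xL_k\oplus T_xW=T_xM$ the inverse function theorem yields open balls $V_1\s\R^{p_1(x)}$, $V_j\s\R^{\bar p_j(x)}$ ($j\geq 2$) and (after identification) $W\s\R^{q_k(x)}$ on which $\phi$ is a $C^r$-diffeomorphism onto an open neighborhood of $x$.

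The main step, and the principal obstacle, is checking that $\phi$ satisfies the distinguished chart condition at every level $i$. Fixing $(v_{i+1}\ld v_k,w)$ and varying $(v_1\ld v_i)$ produces a $p_i(x)$-dimensional slice obtained by applying a composition of time-maps of arrows from $\A_{G_1}\cup\cdots\cup\A_{G_i}=\A_{G_i}$ to the single base point $\phi(0\ld 0,v_{i+1}\ld v_k,w)$; hence the slice lies in one leaf $L\in\F_{G_i}$, and since it has dimension $\dim L$ and is the diffeomorphic image of an open set, it is open in $L$ (by uniqueness of the weakly imbedded structure). For the reverse inclusion I would use that $L$ is weakly imbedded, which makes the inclusion $\phi^{-1}(L)\cap U\hookrightarrow L$ continuous; combined with the dimension count on the transverse factor and a preliminary shrinking of $U$ that makes each slice connected, this forces every point of $\phi(U)\cap L$ to lie on such a slice. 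This produces the required equality $\phi(U)\cap L_i=\phi(V_1\t\cdots\t V_i\t l_i)$ for each $L_i\in\F_{G_i}$ and completes the proof.
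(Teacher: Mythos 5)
Your argument is correct in substance, but it takes a genuinely different route from the paper: the paper offers no construction at all for this theorem, disposing of it in one line as "a straightforward consequence of Theorem 2 in \cite{ry0}", whereas you rebuild the distinguished chart from scratch by the Sussmann--Stefan method. Your key structural choice --- composing the time-maps so that the arrows drawn from the smallest group $G_1$ act outermost and those from $G_k$ innermost --- is exactly what makes the argument work: for fixed $(v_{i+1}\ld v_k,w)$ the map $(v_1\ld v_i)\mapsto\phi(v_1\ld v_k,w)$ is the application of an element of $\hat{\A}_{G_i}$ (using $\A_{G_1}\cup\cdots\cup\A_{G_i}\s\A_{G_i}$) to the base point $\phi(0\ld 0,v_{i+1}\ld v_k,w)$, so these two points always lie in the same accessible set of $G_i$. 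Note that this single observation already yields \emph{both} inclusions of the required identity $\phi(U)\cap L=\phi(V_1\t\cdots\t V_i\t l_i)$, since $\phi(v,w')\in L$ iff $\phi(0,w')\in L$ iff $w'\in l_i$; the weak-imbedding and connectivity detour you sketch for the reverse inclusion is unnecessary. Moreover, the one place where your write-up overreaches is the claim that a slice "has dimension $\dim L$ and is therefore open in $L$": for a singular foliation the leaf $L$ through a nearby base point may have dimension strictly larger than $p_i(x)$, so the slice need not be open in $L$ --- fortunately the definition of distinguished chart only demands the set equality, so nothing is lost. The remaining ingredients (reparametrizing arrows so that the spanning velocities are taken at $t=0$ at $x$, completing bases level by level via $T_xL_{j-1}\s T_xL_j$, choosing a transversal, and the inverse function theorem) are standard and correctly deployed. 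What your approach buys is a self-contained proof readable without consulting \cite{ry0}; what the paper's citation buys is brevity and the full strength of the flag-structure theorem proved there.
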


In fact, it is a straightforward consequence of Theorem 2 in
\cite{ry0}.

\begin{cor} Let   $G_1\leq\ldots\leq G_k\leq\diff^r(M)$ and let
$(L,\sigma)$ be a leaf of $\F_{G_k}$. Then all $G_i$ preserve $L$,
and $\F_{G_1|L}\prec\cdots\prec\F_{G_{k-1}|L}$ is a flag structure
on $L$. Moreover, a distinguished chart at $x$ for
$\F_{G_1|L}\prec\cdots\prec\F_{G_{k-1}|L}$ is the restriction to
$L$ of a distinguished chart at $x$ for
$\F_{G_1}\prec\cdots\prec\F_{G_k}$.
\end{cor}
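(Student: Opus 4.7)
The plan is to reduce everything to Theorem 2.9 and Definition 2.7. First, since $G_i\leq G_{i+1}$, every isotopy in $G_i$ is also an isotopy in $G_{i+1}$, so $\A_{G_i}\s\A_{G_{i+1}}$ and each accessible set of $\A_{G_i}$ lies inside one of $\A_{G_{i+1}}$; equivalently $\F_{G_1}\prec\cdots\prec\F_{G_k}$ is a flag on $M$. In particular every isotopy in $G_i\leq G_k$ keeps the $\F_{G_k}$-leaf $L$ invariant, which delivers the assertion that $G_i$ preserves $L$ (understood through isotopies, as throughout the paper). Moreover every leaf of $\F_{G_i}$ meeting $L$ is contained in $L$, so the partition $\F_{G_i}|L$ of $L$ is well defined and the chain $\F_{G_1}|L\prec\cdots\prec\F_{G_{k-1}}|L$ is nested; that each $\F_{G_i}|L$ is in fact a $C^r$-foliation will drop out of the distinguished chart constructed below.

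Second, I would apply Theorem 2.9 to $\F_{G_1}\prec\cdots\prec\F_{G_k}$ at the point $x\in L$, producing a distinguished chart $(U,\phi)$ with $U=V_1\t\cdots\t V_k\t W$ of the dimensions prescribed in Definition 2.7 and $\phi(U)\cap L_i=\phi(V_1\t\cdots\t V_i\t l_i)$ for every leaf $L_i\in\F_{G_i}$. Shrinking $U$ if necessary, one may assume $l_k=\{0\}$, so $L\cap\phi(U)$ reduces to the single plaque $P:=\phi(V_1\t\cdots\t V_k\t\{0\})$; by the weak embeddedness of $(L,\sigma)$ this plaque is open in $L$, and the restriction of $\phi$ to $V_1\t\cdots\t V_k\t\{0\}$ is a $C^r$-chart on $L$.

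Third, I would identify this restricted chart as a distinguished chart for $\F_{G_1}|L\prec\cdots\prec\F_{G_{k-1}}|L$, with the last factor $V_k\s\R^{\bar p_k(x)}$ now playing the role of the transverse piece $W'$ demanded by Definition 2.7. The dimensions match, since $\dim L-p_{k-1}(x)=p_k(x)-p_{k-1}(x)=\bar p_k(x)$. For each $i\leq k-1$ and each leaf $\Lambda_i$ of $\F_{G_i}|L$, the slice identity $\phi(P)\cap\Lambda_i=\phi(V_1\t\cdots\t V_i\t l'_i)$, with $l'_i\s V_{i+1}\t\cdots\t V_k$, is an immediate specialisation of the corresponding identity for $\F_{G_i}$ in the ambient chart, because $\Lambda_i$ is just the leaf of $\F_{G_i}$ through the corresponding point sitting inside $L$. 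By the remark following Definition 2.7, the existence of such charts upgrades the partition $\F_{G_i}|L$ to a genuine $C^r$-foliation and completes the statement.

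The step I expect to be the main obstacle is purely bookkeeping: arranging $l_k=\{0\}$ by shrinking $U$ without spoiling the flag compatibility for the inner slices $l_i$ with $i<k$, and keeping the product decomposition intact. Once Theorem 2.9 is granted, the rest of the argument is simply an unpacking of Definition 2.7 on $L$.
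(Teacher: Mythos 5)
The paper gives no proof of this corollary at all --- it is stated as an immediate consequence of Theorem 2.8 (which is itself deferred to \cite{ry0}) --- so your argument has to be judged on its own. It reproduces what is clearly the intended route: take a distinguished chart $(U,\phi)$, $U=V_1\t\cdots\t V_k\t W$, for $\F_{G_1}\prec\cdots\prec\F_{G_k}$ at $x\in L$, restrict it to the slice of $L$, and let $V_k\s\R^{\bar p_k(x)}$ play the role of the transverse factor $W'$ required by Definition 2.7; the dimension count $\dim L-p_{k-1}(x)=\bar p_k(x)$ and the specialisation of the slice identities $\phi(U)\cap L_i=\phi(V_1\t\cdots\t V_i\t l_i)$ to the plaque of $L$ are correct, as is invoking the remark after Definition 2.7 to conclude that each partition $\F_{G_i}|L$ is in fact a foliation.

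The one step that would fail as written is the claim that, after shrinking $U$, one may assume $l_k=\{0\}$. If the leaf $L$ is dense in $M$ --- precisely the situation Theorems 1.1 and 1.2 are about --- then $l_k$ is dense in $W$ and stays dense under any shrinking, so $L\cap\phi(U)$ can never be reduced to a single plaque. Fortunately the step is also unnecessary: the ``restriction to $L$'' of the chart should be read as the restriction of $\phi$ to the single slice $V_1\t\cdots\t V_k\t\{0\}$, i.e.\ to the plaque $P$ of $L$ through $x$, irrespective of how many other plaques $L\cap\phi(U)$ contains. That $P$ is open in $(L,\sigma)$ follows from weak imbeddedness exactly as you indicate (the inclusion $P\r M$ has image in $L$, hence $P\r L$ is continuous, and an injective continuous map between manifolds of equal dimension is open by invariance of domain); and your computation of $P\cap\Lambda_i$, obtained by intersecting $l_i$ with $V_{i+1}\t\cdots\t V_k\t\{0\}$, goes through verbatim for the plaque alone. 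With that repair the argument is complete.
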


The following property of paracompact spaces is well-known.
\begin{lem}
If $X$ is a paracompact space and $\U$ is an open cover of $X$,
then there exists an open cover $\V$ starwise finer than $\U$,
that is
 for all $V\in \V$  there
is $U\in\U$ such that $\st^{\V}(V)\s U$. Here
$\st^{\V}(V):=\bigcup\{V'\in\V:\; V'\cap V\neq\emptyset\}$. In
particular, for all $V_1, V_2\in \V$ with $V_1\cap
V_2\neq\emptyset$ there is $U\in\U$ such that $V_1\cup V_2\subset
U$.
\end{lem}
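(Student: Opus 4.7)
The plan is to reproduce A.~H.~Stone's classical theorem that every paracompact Hausdorff space is fully normal. I~would produce the required star refinement in two stages: first a barycentric refinement, then an iteration.

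For the first stage, given $\U$ I would use paracompactness to extract a locally finite open refinement $\mathcal W$ of $\U$, and then use normality (automatic for paracompact Hausdorff spaces) to shrink $\mathcal W$ to a closed cover $\{F_W\}_{W\in\mathcal W}$ with $F_W\subset W$. For each $x\in X$ I~would set
\[V_x=\Bigl(\bigcap_{W\ni x}W\Bigr)\setminus\Bigl(\bigcup_{W\not\ni x}F_W\Bigr),\]
which is open (the intersection is effectively finite and the union is locally finite, hence closed) and contains $x$. I~would then verify that the cover $\V_0=\{V_x\}$ is a \emph{barycentric} refinement of $\U$: if $x\in V_y$, pick $W_0$ with $x\in F_{W_0}$; then $y\in W_0$ (else $V_y$ would miss $F_{W_0}$), so $V_y\subset W_0$, giving $\st^{\V_0}(\{x\})\subset W_0\subset U$ for some $U\in\U$.

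For the second stage I~would iterate: let $\V_0$ be a barycentric refinement of $\U$ and $\V$ a barycentric refinement of $\V_0$. I~claim $\V$ is then a star refinement of $\U$. Given $V_*\in\V$, fix $y\in V_*$; for any $V'\in\V$ meeting $V_*$, pick $z\in V_*\cap V'$ and use the barycentric property of $\V$ to obtain $V_z\in\V_0$ with $V_*\cup V'\subset V_z$. Since $y\in V_z$, we get $V'\subset V_z\subset\st^{\V_0}(\{y\})\subset U$ for some $U\in\U$ (barycentric property of $\V_0$ at $y$). Hence $\st^{\V}(V_*)\subset U$, and the ``in particular'' clause follows from $V_1\cup V_2\subset\st^{\V}(V_1)$ whenever $V_1\cap V_2\neq\emptyset$.

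The only delicate points are the shrinking step (which requires normality) and the combinatorial trick that two successive barycentric refinements produce a full star refinement; both are classical moves in general topology, and no further ingredient is needed.
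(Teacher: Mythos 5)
Your argument is correct: it is the standard proof of A.~H.~Stone's theorem that paracompact (Hausdorff) spaces are fully normal, done via a barycentric refinement followed by the classical observation that a barycentric refinement of a barycentric refinement is a star refinement, and all the individual steps (local finiteness of $\mathcal W$, the shrinking lemma, the verification that $x\in V_y$ forces $V_y\subset W_0$) check out. The paper itself offers no proof --- it states the lemma as a well-known property of paracompact spaces --- so there is nothing to compare against except the implicit textbook reference, which is exactly the argument you give; the only point worth flagging is that you (rightly) need the Hausdorff/normality hypothesis that the paper's phrasing leaves tacit, harmless here since the spaces in question are paracompact manifolds.
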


\begin{prop}
If $\F\in\frak F^r(M)$ then $G_{\F}=\a(\F)$ is factorizable.
\end{prop}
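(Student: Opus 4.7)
The plan is to adapt the standard fragmentation argument for $\diff^r_c(M)_0$ to the leaf-preserving setting. The key observation is that if $\{f_t\}$ is a leaf-preserving isotopy, then its generating time-dependent vector field $X_t$ satisfies $X_t(x) \in \mathcal{D}(\F)(x)$ for every $(t,x)$, and this pointwise tangency property is preserved by multiplication with any smooth scalar function as well as by pushforward under any leaf-preserving diffeomorphism.

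Fix $g \in G_\F$ and choose a compactly supported leaf-preserving $C^r$-isotopy $\{f_t\}_{t\in[0,1]}$ joining $\id$ to $g$. Let $K := \bigcup_t \supp(f_t)$, which is compact, and let $X_t$ be the generating field. Given the open cover $\U$, apply Lemma 2.9 to extract a refinement $\V$ starwise finer than $\U$, and pick finitely many $V_1,\ldots,V_k \in \V$ covering $K$. Choose a smooth partition of unity $\{\rho_i\}$ with $\supp(\rho_i)\subset V_i$ and set $X_t^i := \rho_i X_t$. Each $X_t^i$ is compactly supported in $V_i$ and remains tangent to $\F$ since $\mathcal{D}(\F)(x)$ is a linear subspace of $T_xM$.

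Next, run the standard iterative decomposition: let $\psi_t^1$ be the isotopy generated by $X_t^1$, and inductively let $\psi_t^i$ be the isotopy generated by $((\psi_t^1\circ\cdots\circ\psi_t^{i-1})^{-1})_* X_t^i$. Then $f_t = \psi_t^1 \circ \cdots \circ \psi_t^k$ and each $\psi_t^i$ is leaf-preserving, because its generator is still tangent to $\F$. Setting $g_i := \psi_1^i$ yields $g = g_1\cdots g_k$ with each $g_i \in G_\F$.

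The main obstacle I anticipate is confining each $g_i$ to a single element of $\U$: the support of $\psi_t^i$ lies in $(\psi_t^1 \circ \cdots \circ \psi_t^{i-1})^{-1}(V_i)$, which can drift away from $V_i$ as earlier factors act. This is precisely what the starwise-finer property of $\V$ from Lemma 2.9 is tailored to handle, if necessary in combination with a preliminary subdivision of $[0,1]$ into subintervals short enough that the accumulated displacement of each $V_i$ remains inside a single $U \in \U$. Continuity of the isotopy together with compactness of $K$ then closes the argument.
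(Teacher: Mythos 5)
Your proposal is correct and follows essentially the same route as the paper: fragment the generating vector field $X_t$ of a leaf-preserving isotopy by a partition of unity subordinate to a cover starwise finer than $\U$ (tangency to $\D(\F)$ being preserved under multiplication by functions and under pushforward by leaf-preserving maps), and confine supports by first making the isotopy small via subdivision of the time interval. The only cosmetic difference is that the paper builds the factors as telescoping quotients $f_t^j=g_t^j\circ(g_t^{j-1})^{-1}$ of the flows of the partial sums $\lambda_1X_t+\cdots+\lambda_jX_t$, whereas you integrate the conjugated pieces directly; both yield the same support estimate $\supp\subset\st(V_j)\subset U$.
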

\begin{proof}
Let $\frak X_c(M,\F)$ be the Lie algebra of all compactly
supported vector fields on $M$ tangent to $\F$. Then there is a
one-to-one correspondence between isotopies $f_t$ in $G_{\F}$ and
smooth paths $X_t$ in $\frak X_c(M,\F)$ given by the equation
\begin{equation*}
\frac{df_t}{dt}=X_t\circ f_t \quad \mathrm{with} \quad f_0=\id.
\end{equation*}
Let $f=(f_t)\in\I G_{\F}$  and let $X_t$ be the corresponding
family in $\frak{X}_c(M,\F)$. By considering
$f_{(p/m)t}f^{-1}_{(p-1/m)t}$, $p=1,\dots ,m$, instead of $f_t$ we
may assume that $f_t$ is close to the identity.

Let $\U$ be an open cover of $M$.
 We choose a
 family of open sets, $(V_j)_{j=1}^s$, which
is starwise finer than $\U$, and satisfies $\supp(f_t)\subset
V_1\cup\dots \cup V_s$ for each $t$. Let $(\lambda_j)_{j=1}^s$ be
a partition of unity subordinate to $(V_j)$, and let
$Y_t^j=\lambda_jX_t$. We set
$$X_t^j=Y_t^1+\dots +Y_t^j,\quad j=1,\dots ,s,$$
and $X_t^0=0$. Each of the smooth families $X_t^j$ integrates to
an isotopy $g_t^j$ with support in $V_1\cup\dots \cup V_j$. We get
the fragmentation
$$f_t=g_t^s=f_t^s\circ \dots \circ f_t^1,$$
where $f_t^j=g_t^j\circ (g_t^{j-1})^{-1}$, with the required
inclusions
$$\supp(f_t^j)=\supp(g_t^j\circ (g_t^{j-1})^{-1})\subset \st(V_j)\subset U_{i(j)}$$
which hold if $f_t$ is sufficiently small. Thus the group of
isotopies of $G_{\F}$ is factorizable. Consequently, $G_{\F}$
itself is factorizable.
\end{proof}

\begin{rem} The identification $\a_M$ enables us to consider several
new properties of foliations from $\frak F^r(M)$. For instance,
one can say that a foliation $\F$ is \emph{perfect} if so is the
corresponding diffeomorphisms  group $G_{\F}=\a_M(\F)$. As we
mentioned before it is known that $G_{\F}=\diff^r_c(M,\mathcal
F)_0$ is perfect provided $\F$ is regular and
 $1\leq r\leq\dim\F$ or $r=\infty$ (\cite{ry1}, \cite{Tsu1}, \cite{ry2}).
 It is not known whether $G_{\F}$ is perfect for singular foliations and
 a possible proof seems to be very difficult. In turn, a possible perfectness of $G_{\F}=\diff^r_c(M,\mathcal
F)_0$ with $r$ large is closely related to the simplicity of
$\diff^{n+1}_c(M^n)_0$, see \cite {le-ry}.

Likewise, one can consider \emph{uniformly perfect} or
\emph{bounded} foliations by using the corresponding notions for
groups, see \cite{ry6} and references therein.
\end{rem}

Finally consider the following important feature of subclasses of
the class $\frak F^r(M)$, depending also on $M$ and $r$. A
subclass $\frak K$ of $\frak F^r(M)$ is called \emph{faithful} if
the following holds: For all $\F_1,\F_2\in\frak K$ and for any
group isomorphism $\Phi:\alpha_M(\F_1)\cong\alpha_M(\F_2)$ there
is a $C^r$ foliated diffeomorphism $\phi:(M,\F_1)\cong(M,\F_2)$
such that $\forall f\in\alpha_M(\F_1)$, $\Phi(f)=\phi\ci
f\ci\phi^{-1}$. From reconstruction results of Rybicki \cite{ry4}
and Rubin \cite{Rub} it is known that the class of regular
foliations of class $C^{\infty}$, $\frak F_{reg}^{\infty}(M)$, is
faithful.

\section{Proof of Theorem 1.1 and 1.2}

\emph{Proof of Theorem 1.1}. First observe that the fact  that a
foliation $\F$ has no proper minimal set is equivalent to the
statement that all leaves of $\F$ are dense.

$(\Rightarrow)$ Assume that $\emptyset\neq L\s M$ is a proper
closed saturated subset of $M$. Choose $x\in M\setminus L$. We
prove the following statement:

$(*)$ there are a ball $U\s M\setminus L$ with $x\in U$ and $g\in
[G_U,G_U]$ such that  $g(x)\neq x$.

We are done in view of $(*)$ by setting $H:=\{g\in [G,G]:
g|_L=\id_L\}$. To prove $(*)$, choose balls $U$ and $V$ in $M$
such that $x\in V\s \overline{V}\s U$. Take $f\in G$ such that
$f(x)\neq x$. In light of the assumption, for $\U=\{U,
\setminus\overline V\}$
 we may write $g=g_r\ldots g_1$,
where all $g_i$ are supported in elements of $\U$. Let
$s:=\min\{i\in\{1\ld r\}:\;  \supp(g_i)\s U \text{\; and\;}
g_i(x)\neq x\}$. Then $g_s\in G_U$ satisfies $g_s(x)\neq x$.

Now take an open $W$ such that $x\in W\s U$ and $g_s(x)\not\in W$.
Choose $f\in G_W$ with $f(x)\neq x$ by an argument similar to the
above. It follows that $f(g_s(x))=g_s(x)\neq g_s(f(x))$ and,
therefore, $[f,g_s](x)\neq x$. Thus $g=[f,g_s]$ satisfies the
claim.

$(\Leftarrow)$ First observe the following commutator formulae for
all $f,g,h\in G$
\begin{equation}
[fg,h]=f[g,h]f^{-1}[f,h],\quad [f,gh]=[f,g]g[f,h]g^{-1}.
\end{equation}
Next, in view of a theorem of Ling \cite{li} we have that $[G,G]$
is a perfect group, that is
\begin{equation}
[G,G]=[[G,G],[G,G]].
\end{equation}

Suppose that $H$ is a nontrivial normal subgroup of $[G,G]$. Let
$x\in M$ satisfy $h(x)\neq x$ for some $h\in H$. Fix a ball $U_0$
such that $h(U_0)\cap U_0=\emptyset$. By the definition of
$\F_{[G,G]}$ and the assumption that each leaf $L\in\F_{[G,G]}$ is
dense, for every $y\in M$ there are a ball $U_y$ with $y\in U_y$
and $f_y\in [G,G]$ such that $f_y(U_y)\s U$. Let $\U=\{U_y\}_{y\in
M}$.

Due to Lemma 2.10  we can find an open cover $\V$  starwise finer
than $\U$. We denote $\U^G=\{g(U)|\, U\in\U,\, g\in[G,G]\}$ and
\begin{equation*}G^{\U}=\prod\limits_{U\in \U^G}[G_U,G_U].\end{equation*}
 By assumption  $G$ is factorizable with respect to $\V$.
 First we show that
$[G,G]\s G^{\U}$, i.e. that any $[g_1,g_2]\in[G,G]$ can be
expressed as a product   elements of $G^{\U}$ of the form
$[h_1,h_2]$, where $h_1,h_2\in G_U$ for some $U\in\U^G$. In view
of (3.1) and (3.2) we may assume that $g_1, g_2\in[G,G]$. Now the
relation $[G,G]\s G^{\U}$  is an immediate consequence of (3.1)
and the fact that $\V$ is starwise finer than $\U$.

  Next we have to show that $G^{\U}\s H$. It suffices to check that for
every $f,g\in G_U$ with $U\in \U$ the bracket $[f,g]$ belongs to
$H$. This implies that for every $f,g\in G_U$ with $U\in\U^G$ one
has $[f,g]\in H$, since $H$ is a normal subgroup in $[G,G]$.

We have fixed $h\in H$ and $U_0$ such that $h(U_0)\cap
U_0=\emptyset$. If $U\in\U$ and $f,g\in G_U$, take $k\in[G,G]$
such that $k(U)\s U_0$, and put $\bar f=kfk^{-1}$, $\bar
g=kgk^{-1}$. It follows that $[h\bar fh^{-1}, \bar g]=\id$.
Therefore, $[\bar f,\bar g]=[[h,\bar f],\bar g]\in H$, and we have
also that $[f,g]\in H$. Thus we have $G^{\U}\s H$ and,
consequently, $[G,G]\leq H$, as required. \quad $\square$

\medskip

\emph{Proof of Theorem 1.2}. By assumption and Prop. 2.11
$\diff^r(M,\mathcal F)_0$ is factorizable and non-fixing. Since
$\diff^r(M,\mathcal F)_0$ is isotopically connected, the first
assertion follows from Theorem 1.1. The second assertion is a
consequence of theorems stating that $\diff^r(M,\mathcal F)_0$ is
perfect (\cite{ry1} and \cite{Tsu1} for $r=\infty$, and \cite{Mat}
and \cite{ry2} for $1\leq r\leq\dim\F$). \quad $\square$

\end{document}